\tikzstyle{level 1}=[level distance=3.5cm, sibling distance=8cm]
\tikzstyle{level 2}=[level distance=3.5cm, sibling distance=3cm]
\tikzstyle{level 3}=[level distance=5cm, sibling distance=1.4cm]
\pgfplotsset{%
every x tick/.style={black, thick},
every y tick/.style={black, thick},
every tick label/.append style = {font=\footnotesize},
every axis label/.append style = {font=\footnotesize},
compat=1.12
  }
\definecolor{Red}{rgb}{1.00, 0.00, 0.00}
\definecolor{DarkGreen}{rgb}{0.00, 1.00, 0.00}
\definecolor{Blue}{rgb}{0.00, 0.00, 1.00}
\definecolor{Cyan}{rgb}{0.00, 1.00, 1.00}
\definecolor{Magenta}{rgb}{1.00, 0.00, 1.00}
\definecolor{DeepSkyBlue}{rgb}{0.00, 0.75, 1.00}
\definecolor{DarkGreen}{rgb}{0.00, 0.39, 0.00}
\definecolor{dgreen}{RGB}{0,200,100}
\definecolor{ddgreen}{RGB}{0,170,0}
\definecolor{SpringGreen}{rgb}{0.00, 1.00, 0.50}
\definecolor{DarkOrange}{rgb}{1.00, 0.55, 0.00}
\definecolor{OrangeRed}{rgb}{1.00, 0.27, 0.00}
\definecolor{DeepPink}{rgb}{1.00, 0.08, 0.57}
\definecolor{DarkViolet}{rgb}{0.58, 0.00, 0.82}
\definecolor{SaddleBrown}{rgb}{0.54, 0.27, 0.07}
\definecolor{Black}{rgb}{0.00, 0.00, 0.00}
\definecolor{dark-magenta}{rgb}{.5,0,.5}
\definecolor{myblack}{rgb}{0,0,0}
\definecolor{darkgray}{gray}{0.5}
\definecolor{lightgray}{gray}{0.75}
\newcommand{\p}{\partial}
\newcommand{\R}{\mathbb{R}}
\def\hatgap{-5.5pt}
\def\subdown{-3.3pt}
\newcommand\what[2][]{%
\renewcommand\stackalignment{l}%
\stackon[\hatgap]{#2}{%
\stretchto{%
    \scalerel*[\widthof{$#2$}]{\kern-.6pt\text{\textasciicircum}\kern-1.1pt}%
    {\rule[-0.8\textheight]{1ex}{\textheight}}
}{2ex}
_{\smash{\belowbaseline[\subdown]{\scriptscriptstyle#1}}}%
}}
\newcommand{\rr}{\mathbb{R}}
\renewcommand{\p}{\partial}
\def\mathcolor#1#{\@mathcolor{#1}}
\def\@mathcolor#1#2#3{%
  \protect\leavevmode
  \begingroup
    \color#1{#2}#3%
  \endgroup
}
\def\refer #1\par{\noindent\hangindent=\parindent\hangafter=1 #1\par}
\theoremstyle{plain}  
\newtheorem{theorem}{Theorem}[section]
\newtheorem{lemma}{Lemma}[section]
\theoremstyle{definition}
\newtheorem{remark}{Remark}[section]
\newenvironment{Proof}[1][\proofname]
{\proof[\textnormal{\textbf{#1.}}]}{\endproof}
\newcommand{\bp}{\begin{Proof}}
\newcommand{\ep}{\end{Proof}}
\numberwithin{figure}{section}
\numberwithin{equation}{section}
\def\ra{\right>}
\def\la{\left<}
\def\lm{\left\|}
\def\rm{\right\|}
\begin{document}

\title[Improved lower bound on the radius of analyticity]{Improved lower bound for the radius of analyticity 
			for the modified KdV equation }
\author{ Renata O. Figueira and Mahendra  Panthee}
\address{Department of Mathematics, University of Campinas\\13083-859 Campinas, SP, Brazil}
\thanks{This work is partially supported by FAPESP, Brazil.}

\maketitle
\onehalfspacing

\begin{abstract}
We investigate the initial value problem (IVP) associated to the modified Korteweg-de Vries equation (mKdV) in the defocusing scenario:
\begin{equation*}
\left\{\begin{array}{l}
\partial_t u+ \partial_x^3u-u^2\p_x(u) 
=
0,
\quad x,t\in\mathbb{R}, \\
u(x,0)
=
u_0(x),
\end{array}\right.
\end{equation*}
where $u$ is a real valued function and the initial data $u_0$ is analytic on $\rr$ and has uniform radius of analyticity $\sigma_0$ in the spatial variable.
 It is well-known  that the solution $u$ preserves its analyticity with the same radius $\sigma_0$ for at least some time span $0<T_0\le 1$.
This local result was obtained in \cite{FP23} by proving a trilinear estimate in the Gevrey spaces $G^{\sigma, s}$, $s\geq \frac14$. Global in time behaviour of the solution and algebraic lower bound of the evolution of the radius of analyticity was also studied in authors' earlier works in \cite{FP23} and \cite{FP24} by constructing almost conserved quantities in the classical  Gevrey  space  with $H^1$ and $H^2$ levels of Sobolev regularities.
The present study aims to construct a new almost conservation law in the Gevrey space defined with a weight function  $\cosh(\sigma|\xi|)$ and use it   demonstrate that the local solution $u$ extends globally in time, and the radius of spatial analyticity is bounded from below by $c T^{-\frac{1}{2}}$, for any time $T\geq T_0$. 
The outcome of this paper represents an improvement on the one achieved by the authors' previous work in  \cite{FP24}.

\end{abstract}
\vskip 0.3cm

{\it Keywords:} Modified KdV equation, radius of analyticity, Initial value problem, local and global well-posedness, Spatial analyticity, Fourier restriction norm, modified Gevrey spaces
 
 \vspace{0.2cm}
 {\it 2020 AMS Subject Classification:}  35A20, 35Q53, 35B40, 35Q35. 

\vspace{0.5cm}
\section{Introduction}

Continuing the research started in \cite{FP23} and \cite{FP24}, the main subject of study in this work is the initial value problem (IVP) for the modified Korteweg-de Vries (mKdV) equation
\begin{equation}\label{mKdV-IVP}
\left\{\begin{array}{l}
\partial_t u+ \partial_x^3u+\mu u^2\p_x(u) 
=
0,
\quad x,t\in\rr, \\
u(x,0)
=
u_0(x),
\end{array}\right.
\end{equation}
where $\mu=\pm 1$, $u$ is a real-valued function, and $u_0$ belongs to the Gevrey class $G^{\sigma,s}(\mathbb{R})$.

The mKdV equation \cite{M} emerges as an extension of the renowned Korteweg-de Vries (KdV) equation \cite{KdV}, distinguished by its focusing case for $\mu = 1$ and defocusing for $\mu = -1$.
Widely prevalent across diverse physical scenarios, the mKdV equation finds application in various domains. Notable examples include its role in plasma wave propagation \cite{D}, dynamics of traffic flow \cite{W}, studies in fluid mechanics \cite{J} and investigations into nonlinear optics \cite{HK}.

In the recent time, the IVP \eqref{mKdV-IVP} associated to the mKdV equation with given data in the Gevrey spaces  $G^{\sigma,s}(\mathbb{R})$ has attracted attention, see for example \cite{BGK}, \cite{FP23} and \cite{FP24}. For $\sigma>0$ and $s\in \mathbb{R}$, the Gevrey space $G^{\sigma,s}(\mathbb{R})$ is generally defined as
$$
G^{\sigma,s}(\mathbb{R})
:=
\left\{ f\in L^2(\mathbb{R});\;
\|f\|_{G^{\sigma,s}}
=
\Big(\int \la\xi\ra^{2s}e^{2\sigma |\xi|}|\widehat{f}(\xi)|^2 d{\xi}\Big)^\frac 12
<
\infty\right\},
$$
where $\la\xi\ra= (1+|\xi|)$ and $\widehat{f}$ denotes the Fourier transform given by
$$
\widehat{f}(\xi)
=\mathcal{F}(f)(\xi)
=
c\int e^{-ix\xi}f(x)d x.
$$
This space measures the regularity of the initial data and is, in some sense, a generalization of the classical $L^2$-based Sobolev space because for $\sigma=0$, $G^{\sigma, s}(\R)$ simply becomes $H^s(\R)$.

Derived from the well-known Paley-Wiener theorems, it is established that a function $f\in G^{\sigma,s}(\mathbb{R})$ is analytic and possesses a holomorphic extension $\tilde{f}$ defined on the strip $S_\sigma=\{x+iy; |y|<\sigma\}$, satisying $\sup_{|y|<\sigma} \|\tilde{f}(\cdot+iy)\|_{H^s}<\infty$ (we refer to page 174 in \cite{Ka} for details).
The parameter $\sigma>0$ is commonly referred to as the uniform radius of analyticity, as it determines the width of the strip where the function in $G^{\sigma,s}(\mathbb{R})$ can be holomorphically extended.

The well-posedness issues of the IVP \eqref{mKdV-IVP} with given data in the classical Sobolev spaces $H^s(\R)$ is well understood. For the sharp local well-posedness result, we refer to \cite{KPV93}, where the authors used the smoothing effect of the associated linear group combined with Strichartz and maximal function estimates to accomplish the work. This result was later reproduced in \cite{Tao} using trilinear estimates in the Bourgain's space framework, which is defined as the completion of the Schwartz space with respect to the norm
\begin{equation}\label{Xsb}
\|f\|_{X^{s,b}}
=
\big\|\la\xi\ra^{s}\la\tau-\xi^{3}\ra^{b}\widehat{f}(\xi,\tau)\big\|_{L^2_{\xi,\tau}},
   \end{equation}
for $s,b\in\rr$. Bourgain's space $X^{s,b}$, whose norm \eqref{Xsb} is defined using the restriction of the Fourier transform, has proved to be very convenient for dealing with the well-posedness issues for dispersive equations with low regularity  data.  Using this framework, the sharp global well-posedness result for the IVP \eqref{mKdV-IVP} with initial data in the classical Sobolev spaces $H^s(\R)$ was obtained in \cite{CKSTT}.

Concerning the IVP \eqref{mKdV-IVP} with initial data $u_0$  in the Gevrey spaces $G^{\sigma_0,s}(\mathbb{R})$ for a fixed $\sigma_0>0$, the following natural questions arise: 

\begin{itemize}
\item Is it possible to obtain a local well-posedness result and maintain the radius of analyticity during the local existence time?
\item Is it possible to extend the local solution $u(t)$ to any time interval $[0, T]$, and what is the behaviour of the radius of analyticity, say $\sigma(t)$, as time progresses? If $\sigma(t)$ decreases as time evolves, is it possible to find a lower bound?
\end{itemize}

In recent time, these sort of questions are widely studied for dispersive equations. For example \cite{BFH, BGK, DMT, FP23, FP24, GTB, GK-1, SS, ST, T, TTB} are a few notable works. For the IVP \eqref{mKdV-IVP} with initial data in $G^{\sigma, s}(\R)$, the first local well-posedness result was obtained in \cite{GK-1} whenever $s>\frac32$. The local well-posedness result proved in \cite{GK-1} ensures the existence of a finite time $T_0>0$ such that $\sigma(t)=\sigma_0$ for all $|t|\leq T_0$, indicating that the radius of analyticity of the solution remains the same as the initial one for at least in small time interval. 
Now, a natural question that one may raise, is it possible to extend the local solution globally in time and what happens to $\sigma(t)$ for $|t|>T_0$? The global well-posedness issue was addressed in \cite{BGK}, where the authors  proved that the local solution can be extended globally in time to any given interval $[0, T]$ and also obtained an algebraic lower bound $cT^{-12}$ for the evolution of the radius of analyticity. The next question one may naturally ask is whether the lower bound obtained in \cite{BGK} can be improved, in the sense that the decay rate of the radius of analyticity could be slower than the one obtained in \cite{BGK}, allowing the solution $u(t)$ to remain analytic in a larger strip for a longer time.

Motivated by the question raised in the previous paragraph, the authors in \cite{FP23} considered the IVP  \eqref{mKdV-IVP} with initial data in $G^{\sigma, s}(\R)$ and proved the local well-posedness result for $s\geq \frac14$ and also obtained $cT^{-\frac43}$ as an algebraic lower bound for evolution of the radius of analyticity while extending local solution globally in time, offering a significant improvement in lower bounds for the radius of analyticity obtained in \cite{BGK}. This lower bound was further improved to $cT^{-1}$ in the authors' subsequent work \cite{FP24}.

The technique used in \cite{FP23} and \cite{FP24}, which is also employed here in this work, essentially consists in proving an almost conservation law of the form
\begin{equation}
\label{ACL}
\sup\limits_{t\in[0,T_0]}
A_\sigma(t)
\le
A_\sigma(0) +C\sigma^\theta A_\sigma(0)^n, 
\;\; \theta>0,
\end{equation}
for all $0<\sigma\le \sigma_0$, where $C$ is a positive constant, $n\in\mathbb{N}$ and $A_\sigma(t)$ is a quantity related to the conservation laws satisfied by the flow of \eqref{mKdV-IVP}, and consequently, with the norm of the local solution at time $t\in [0, T_0]$, i.e., $\|u(\cdot,t)\|_{G^{\sigma,s}}$. This idea of almost conservation law was first introduced in \cite{ST} and has since been utilized by several authors, see for example \cite{SS}, \cite{FP23}, \cite{FP24} and references therein. With the almost conservation law of the form \eqref{ACL} at hand,  it is possible to extend the local solution $u$ globally in time  by decomposing any time interval $[0,\,T]$ into short subintervals and iterating the local result in each subinterval. During this iteration process, a restriction emerges that provides a lower bound for the radius of analyticity $\sigma(T)$ as described below
$$
u\in C([-T,T]; G^{\sigma(T),s}(\rr)), \;\;\text{ with }\;\; \sigma(T)\ge \frac{c}{T^\frac 1\theta},
$$
where $\theta$ corresponds precisely to the value appearing in \eqref{ACL} and $c>0$ is a positive constant independent of $T$.
It is noteworthy that a larger value of $\theta$ yields a better lower bound for $\sigma(t)$, signifying a slower decay of the radius of analyticity as time goes to infinity.

In our previous work \cite{FP23}, the following almost conservation law at level $G^{\sigma, 1}(\mathbb{R})$ in the defocusing case ($\mu = -1$) was proven
\begin{equation}
\label{ACL1}
\sup\limits_{t\in[0,T_0]}
A_{I,\sigma}(t)
\leq
A_{I,\sigma}(0) + C\sigma^{\theta} A_{I,\sigma}(0)^2 \big(1+A_{I,\sigma}(0)\big),
\qquad \theta\in\Big[0, \frac34\Big],
\end{equation}
where $A_{I,\sigma}(t)$ is defined by
\begin{equation*}
A_{I,\sigma}(t)
=
\|u(t)\|^2_{G^{\sigma,1}}-\frac{\mu}{6} \|e^{\sigma |D_x|}u\|_{L^4_x}^4,
\end{equation*}
thus obtaining $cT^{-\frac43}$ as a lower bound for the radius of analyticity.
While in the subsequent work \cite{FP24}, also in the defocusing case, an improvement in the interval to which the exponent $\theta$ belongs was achieved by proving the following conservation law at the level $G^{\sigma,2}(\mathbb{R})$
\begin{equation}
\label{ACL2}
\sup\limits_{t\in[0,T_0]}
A_{II,\sigma}(t)
\leq
A_{II,\sigma}(0) + C\sigma^\theta A_{II,\sigma}(0)^2 \big(1+A_{II,\sigma}(0)+A_{II,\sigma}(0)^2\big),
\qquad \theta\in [0,1],
\end{equation}
where $A_{II,\sigma}(t)$ is given by
\begin{equation}
\label{AII}
\begin{split}
A_{II,\sigma}(t)
&=
\|e^{\sigma |D_x|}u(t)\|^2_{L_x^2}+\|\p_x(e^{\sigma |D_x|}u(t))\|^2_{L_x^2} +\|\p_x^2(e^{\sigma |D_x|}u(t))\|^2_{L_x^2}\\
& \quad-
\frac{\mu}{6}\|e^{\sigma |D_x|}u(t)\|_{L^4_x}^4
-
\frac{5\mu}{3}
\|(e^{\sigma |D_x|}u)\cdot\p_x(e^{\sigma |D_x|}u)(t)\|_{L^2_x}^2
+
\frac{1}{18}
\|e^{\sigma |D_x|}u(t)\|_{L_x^6}^6,
\end{split}
\end{equation}
and consequently, obtaining $cT^{-1}$ as a lower bound for the radius of analyticity of the solution.

Since the mKdV equation has infinitely many conserved quantities, the natural path to follow would be to consider the almost conservation laws at  higher levels of regularity.
However, a crucial observation must be made: both proofs of the inequalities \eqref{ACL1} and \eqref{ACL2} rely on a particular property of the exponential function
\begin{equation}
\label{exp-est}
e^{\sigma |\xi|}-1
\leq
(\sigma |\xi|)^\theta e^{\sigma |\xi|},
\quad \theta\in[0,1].
\end{equation}
The presence of the the exponential weight  $e^{\sigma |\xi|}$ in the definition of the Gevrey space  $G^{\sigma,s}(\rr)$ naturally leads to use estimate of the form \eqref{exp-est}. Therefore, due to the necessity of inequality \eqref{exp-est} in the technique presented in the proofs of \eqref{ACL1} and \eqref{ACL2}, the highest achievable value for $\theta$ is $\theta=1$, resulting in $cT^{-1}$ as the best lower bound for the radius of analyticity.  This is the reason for considering the almost conserved quantities only up to level $G^{\sigma,2}$ using this approach with the exponential weight in the norm of $G^{\sigma,s}(\rr)$. Taking this observation in mind, to get a better lower bound for the evolution of the radius of analyticity one needs to find some other alternative to avoid the use of the estimate \eqref{exp-est} in the form as it is.

Motivated from the recent contributions in \cite{DMT}, \cite{GTB} and \cite{TTB}, our current investigation considers a modified Gevrey norm by replacing the exponential weight $e^{\sigma |\xi|}$ in $\| \cdot\|_{G^{\sigma,s}}$ with a hyperbolic cosine weight $\cosh(\sigma |\xi|)$, defined as follows
\begin{equation}
\| f\|_{H^{\sigma,s}}
=
\Big(\int \la\xi\ra^{2s}\cosh^2(\sigma |\xi|)|\widehat{f}(\xi)|^2 d{\xi}\Big)^\frac 12.
\end{equation}
The norms $\| \cdot\|_{G^{\sigma,s}}$ and $\| \cdot\|_{H^{\sigma,s}}$ are equivalent due to the trivial estimate
\begin{equation}
\label{cosh-exp-est}
\frac 12 e^{\sigma |\xi|} 
\le
\cosh (\sigma |\xi|)
\le
e^{\sigma |\xi|}.
\end{equation}
The use of the hyperbolic cosine  weight in the Gevrey norm offers an advantage as the function $\cosh(\sigma|\xi|)$ complies with the estimate
\begin{equation}\label{cosh-1}
\cosh(\sigma|\xi|)-1\leq (\sigma|\xi|)^{2\theta}\cosh(\sigma|\xi|), \qquad \theta\in [0,1].
\end{equation}
Observe that, considering $\theta =1$ in \eqref{cosh-1} it is possible  to achieve an exponent $2$ for $\sigma$ in the almost conservation law, which translates in $cT^{-\frac 12}$ as the new lower bound for the radius of analyticity (see Theorems \ref{ACL-mKdV-thm} and \ref{global-mKdV-thm}  below).
 
The analytic Bourgain space with  hyperbolic cosine as the weight function $X^{\sigma,s,b}$ related to the mKdV equation are used to get the results of this work, whose norm is defined as
\begin{equation}
\label{Xsb-norm}
\|u\|_{X^{\sigma,s,b}}
=
\big\|\cosh(\sigma|\xi|)\la\xi\ra^{s}\la\tau-\xi^{3}\ra^{b}\widehat{u}(\xi,\tau)\big\|_{L^2_{\xi,\tau}},
   \end{equation}
for $s,b\in\rr$ and $\sigma >0$. 
Also, for $T>0$ we consider the restricted Bourgain's space $X^{\sigma,s,b}_{T}$
with norm  given by
\begin{equation}
\label{sb-restrict-norm}
\| u\|_{X^{\sigma,s,b}_{T}}
=
\inf\limits_{\tilde{u}\in X^{\sigma,s,b}}\big\{
\| \tilde{u}\|_{X^{\sigma,s,b}};\;\tilde{u}(x,t)
=
u(x,t)
\;\;\mbox{on}\;\; \mathbb{R}\times[-T,T]
\big\}.
\end{equation}
Similarly, we define $\| u\|_{X^{s,b}_{T}}$ considering $\sigma =0$ in \eqref{sb-restrict-norm}.
Considering the hyperbolic cosine as the weight function in the space measuring the regularity of the initial data, it is reasonable to extend this consideration to the Bourgain spaces.

Again, in the light of \eqref{cosh-exp-est}, one observes that $\|\cdot\|_{X^{\sigma,s,b}}$ and  $\|\cdot\|_{Y^{\sigma,s,b}}$ are equivalent, where 
\begin{equation}
\label{Ysb-norm}
\|u\|_{Y^{\sigma,s,b}}
=
\big\| e^{\sigma|\xi|}\la\xi\ra^{s}\la\tau-\xi^{3}\ra^{b}\widehat{u}(\xi,\tau)\big\|_{L^2_{\xi,\tau}}
 \end{equation}
is the norm of  the analytic Bourgain's space $Y^{\sigma,s,b}$ with exponential weight function.

Using that the norms   $\| \cdot\|_{G^{\sigma,s}}$ and $\| \cdot\|_{H^{\sigma,s}}$, and  $\|\cdot\|_{X^{\sigma,s,b}}$ and  $\|\cdot\|_{Y^{\sigma,s,b}}$ are equivalent, with the procedure used in \cite{FP23} one can easily prove the following local well-posedness result.

\begin{theorem}\label{lwp-mKdV-thm}
Let $\sigma_0>0$, $b>\frac12$ and $s\ge \frac14$. For each $u_0\in H^{\sigma_0,s}(\rr)$ there exists a time 
\begin{equation}\label{lifetime}
T_0=T_0(\|u_0\|_{H^{\sigma_0,s}})=\frac{c_0}{(1+\|u_0\|_{H^{\sigma_0,s}}^2)^a},\qquad c_0>0, \quad a>1,
\end{equation}
 such that the IVP \eqref{mKdV-IVP} admits a unique solution $u$ in $ C([-T_0,T_0] ; H^{\sigma_0,s}(\rr))\cap X_{T_0}^{\sigma_0,s,b}$ satisfying
\begin{equation}\label{bound.u}
\|u\|_{X^{\sigma_0,s,b}_{T_0}} 
\le 
C\|u_0\|_{H^{\sigma_0,s}}.
\end{equation}
 
\end{theorem}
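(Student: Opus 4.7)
The plan is to follow the standard Bourgain-space fixed-point scheme, leveraging the norm equivalences \eqref{cosh-exp-est} to transfer the argument from \cite{FP23} (which was carried out in the exponential-weight spaces $G^{\sigma_0,s}$ and $Y^{\sigma_0,s,b}$) to the hyperbolic-cosine-weighted spaces $H^{\sigma_0,s}$ and $X^{\sigma_0,s,b}$. Since $\cosh(\sigma_0|\xi|)$ and $e^{\sigma_0|\xi|}$ differ at worst by a factor of $2$, the two families of data and solution norms are comparable, and any quantitative estimate valid in one setting transfers to the other with at most a harmless change of absolute constants. Concretely, I would rewrite the IVP as the Duhamel equation
\begin{equation*}
u(t) = \psi_1(t)S(t)u_0 - \frac{\mu}{3}\psi_{T_0}(t)\int_0^t S(t-t')\,\partial_x(u^3(t'))\,dt', \qquad S(t) = e^{-t\partial_x^3},
\end{equation*}
with $\psi_1,\psi_{T_0}$ suitable smooth time cutoffs, and seek a fixed point in a closed ball of $X^{\sigma_0,s,b}_{T_0}$ via contraction.

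The two ingredients required are (i) the standard Bourgain linear estimates
\begin{equation*}
\|\psi_1(t)S(t)u_0\|_{X^{\sigma_0,s,b}} \lesssim \|u_0\|_{H^{\sigma_0,s}}, \qquad \Big\|\psi_{T_0}(t)\int_0^t S(t-t')F(t')\,dt'\Big\|_{X^{\sigma_0,s,b}} \lesssim T_0^{\varepsilon}\|F\|_{X^{\sigma_0,s,b-1}},
\end{equation*}
valid for some $\varepsilon>0$ whenever $b>\frac{1}{2}$, and (ii) the trilinear estimate
\begin{equation*}
\|\partial_x(uvw)\|_{X^{\sigma_0,s,b-1}} \lesssim \|u\|_{X^{\sigma_0,s,b}}\|v\|_{X^{\sigma_0,s,b}}\|w\|_{X^{\sigma_0,s,b}}, \qquad s\ge \tfrac{1}{4}.
\end{equation*}
The linear estimates are insensitive to the spatial Fourier weight, since the Bourgain machinery acts only in the $\tau$-variable and the weight $\cosh(\sigma_0|\xi|)\langle\xi\rangle^s$ commutes with $\langle\tau-\xi^3\rangle^b$. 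For the trilinear estimate, whenever $\xi = \xi_1+\xi_2+\xi_3$ the triangle inequality gives $|\xi| \le |\xi_1|+|\xi_2|+|\xi_3|$; by the monotonicity of $\cosh$ together with \eqref{cosh-exp-est},
\begin{equation*}
\cosh(\sigma_0|\xi|) \le e^{\sigma_0|\xi|} \le e^{\sigma_0|\xi_1|}e^{\sigma_0|\xi_2|}e^{\sigma_0|\xi_3|} \le 8\,\cosh(\sigma_0|\xi_1|)\cosh(\sigma_0|\xi_2|)\cosh(\sigma_0|\xi_3|).
\end{equation*}
Substituting this pointwise bound on the Fourier side reduces the trilinear estimate in $X^{\sigma_0,s,b}$ to the corresponding Sobolev trilinear estimate in $X^{s,b}$, namely the sharp Tao-type estimate at $s=\frac{1}{4}$ already exploited in \cite{FP23}.

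With both estimates in hand, the map associated to the Duhamel formula, together with its Lipschitz counterpart obtained by multilinearity, is a contraction on the ball $\{u\in X^{\sigma_0,s,b}_{T_0}:\,\|u\|_{X^{\sigma_0,s,b}_{T_0}}\le 2C\|u_0\|_{H^{\sigma_0,s}}\}$ provided $T_0^{\varepsilon}(1+\|u_0\|_{H^{\sigma_0,s}}^2)$ is sufficiently small, which rearranges into the lifetime \eqref{lifetime} for an appropriate exponent $a>1$ and constant $c_0>0$. The embedding $X^{\sigma_0,s,b}_{T_0}\hookrightarrow C([-T_0,T_0];H^{\sigma_0,s})$ available for $b>\frac{1}{2}$ then delivers the time continuity, uniqueness in the ball, and the bound \eqref{bound.u}. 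No genuinely new obstacle appears: the entire technical difficulty is absorbed in the Sobolev trilinear estimate at the endpoint $s=\frac{1}{4}$, and the only additional bookkeeping is the three-line weight transfer displayed above.
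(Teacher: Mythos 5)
Your proposal is correct and matches the paper's approach: the paper disposes of this theorem in one line by invoking the equivalence of the $\cosh$-weighted and exponential-weighted norms (via \eqref{cosh-exp-est}) and citing the contraction argument of \cite{FP23}, which is exactly the transfer you carry out, only spelled out explicitly through the Duhamel formulation, the standard linear $X^{s,b}$ estimates, and the submultiplicativity $\cosh(\sigma_0|\xi|)\le 8\prod_j\cosh(\sigma_0|\xi_j|)$ reducing the trilinear estimate to the unweighted one at $s\ge\frac14$. No gap.
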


In order to extend the local solution obtained in Theorem \ref{lwp-mKdV-thm} globally in time and obtain the lower bound for the evolution of the radius of analyticity, as mentioned above, we will  derive an almost conserved quantity in the $H^{\sigma, 2}$ space. For this we consider the following quantity in modified Gevrey class $H^{\sigma,s}(\rr)$ 
\begin{equation}
\label{A}
\begin{split}
A_\sigma(t)
&:=
\|\cosh(\sigma |D_x|)u(t)\|^2_{L_x^2}+\|\p_x[\cosh(\sigma |D_x|)u(t)]\|^2_{L_x^2} +\|\p_x^2[\cosh(\sigma |D_x|)u(t)]\|^2_{L_x^2}\\
& \quad-
\frac{\mu}{6}\|\cosh(\sigma |D_x|)u(t)\|_{L^4_x}^4
-
\frac{5\mu}{3}
\|[\cosh(\sigma |D_x|)u]\cdot\p_x[\cosh(\sigma |D_x|)u](t)\|_{L^2_x}^2\\
&\quad +
\frac{1}{18}
\|\cosh(\sigma |D_x|)u(t)\|_{L_x^6}^6,
\end{split}
\end{equation}
which comes from \eqref{AII} by replacing $e^{\sigma |D_x|}$ by $\cosh(\sigma |D_x|)$.

With this framework at hand, in the following theorem we will derive an almost conservation law that plays a crucial role to prove the main result of this work.
\begin{theorem}
\label{ACL-mKdV-thm}
Let $\sigma >0$, $u_0\in H^{\sigma,2}(\rr)$ and $u\in C([-T_0,T_0]; H^{\sigma,2}(\rr))$ be the local solution of the IVP \eqref{mKdV-IVP} given by Theorem \ref{lwp-mKdV-thm}.
Let $A_\sigma(t)$ be as defined in \eqref{A}. Then for some $b>1/2$ and  for all $t\in [0,T_0]$, we have
\begin{equation}
\label{ACL-mKdV}
A_\sigma(t)
\le
A_\sigma(0) + C\sigma^2 \|u\|^4_{X^{\sigma,2,b}_{T_0}} \big(1+\|u\|^2_{X^{\sigma,2,b}_{T_0}}+\|u\|^4_{X^{\sigma,2,b}_{T_0}}\big).
\end{equation}
Moreover, considering $\mu=-1$, one has
\begin{equation}
\label{ACL-mKdV-0}
A_\sigma(t)
\le
A_\sigma(0) + C\sigma^2 A_\sigma(0)^2 \big(1+A_\sigma(0)+A_\sigma(0)^2\big).
\end{equation}
\end{theorem}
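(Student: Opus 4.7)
My plan is to follow the same general framework developed in \cite{FP24} for the exponential weight, but to systematically replace the estimate \eqref{exp-est} by its hyperbolic-cosine analogue so that the power of $\sigma$ improves from $1$ to $2$. Set $v:=\cosh(\sigma|D_x|)u$. Applying $\cosh(\sigma|D_x|)$ to \eqref{mKdV-IVP} yields
\begin{equation*}
\partial_t v+\partial_x^{3}v+\mu v^{2}\partial_x v = \mu\big[v^{2}\partial_x v-\cosh(\sigma|D_x|)(u^{2}\partial_xu)\big]\,=:\,F(u),
\end{equation*}
so $v$ solves mKdV up to the multiplicative commutator $F$. In Fourier variables, the symbol of $F$ contains precisely the quantity $\prod_{j=1}^{3}\cosh(\sigma\xi_j)-\cosh(\sigma\xi)$ when $\xi=\xi_1+\xi_2+\xi_3$.

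The main algebraic ingredient is the identity
\begin{equation*}
\cosh(\sigma(\xi_1+\xi_2))-\cosh(\sigma\xi_1)\cosh(\sigma\xi_2)=\sinh(\sigma\xi_1)\sinh(\sigma\xi_2),
\end{equation*}
which, combined with $|\sinh(\sigma\xi)|\le \sigma|\xi|\cosh(\sigma|\xi|)$, yields by induction on $n$, for $\xi=\xi_1+\cdots+\xi_n$,
\begin{equation*}
\Big|\prod_{j=1}^{n}\cosh(\sigma\xi_j)-\cosh(\sigma\xi)\Big|\lesssim\sigma^{2}\Big(\sum_{1\le i<j\le n}|\xi_i||\xi_j|\Big)\prod_{j=1}^{n}\cosh(\sigma\xi_j).
\end{equation*}
This is the replacement for \eqref{exp-est} and supplies the factor $\sigma^{2}$ that is not accessible through the exponential weight, because there one can only use the one-frequency estimate \eqref{exp-est}.

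With $F$ in hand, I differentiate in time the six terms appearing in \eqref{A}. Since $A_\sigma$ is designed so that at $\sigma=0$ it coincides with the classical $H^{2}$-level mKdV conserved quantity evaluated on $v$, the principal contributions cancel and what remains is a sum of trilinear, quintilinear and septilinear forms of the schematic type
\begin{equation*}
\int F(u)\cdot G(v,\ldots,v)\,dx \quad\text{or}\quad\int \big[\cosh(\sigma|D_x|)\Phi(u)-\Phi(v)\big]\,\Psi(v)\,dx,
\end{equation*}
where in every instance the symbol difference bounded above appears. Plugging in that bound, integrating on $[0,T_0]$, and invoking the trilinear, quintilinear and septilinear Bourgain-space estimates proven in \cite{FP24} (which transfer to $X^{\sigma,s,b}$ because of the equivalence \eqref{cosh-exp-est} between $\|\cdot\|_{X^{\sigma,s,b}}$ and $\|\cdot\|_{Y^{\sigma,s,b}}$) delivers \eqref{ACL-mKdV}; the two extra derivatives produced by the $|\xi_i||\xi_j|$ factors are absorbed by the $\langle\xi\rangle^{2}$ weight of $H^{\sigma,2}$, which is precisely why the argument closes at the level $s=2$.

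Finally, \eqref{ACL-mKdV-0} follows from \eqref{ACL-mKdV} by exploiting the defocusing sign $\mu=-1$: every nonlinear correction in \eqref{A} is then nonnegative, so standard Gagliardo-Nirenberg interpolation on the quartic, sextic and sextic-gradient pieces gives $\|u_0\|_{H^{\sigma,2}}^{2}\lesssim A_\sigma(0)\big(1+A_\sigma(0)\big)$, and substituting into the local-in-time bound \eqref{bound.u} yields $\|u\|_{X^{\sigma,2,b}_{T_0}}^{2}\lesssim A_\sigma(0)\big(1+A_\sigma(0)\big)$; inserting this into \eqref{ACL-mKdV} produces \eqref{ACL-mKdV-0}. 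The main obstacle I anticipate is the septilinear error generated by the $L^{6}$ term in \eqref{A}: after the symbol bound, two extra derivatives have to be distributed over seven factors without destroying the multilinear estimate, and this is by far the tightest counting in the whole argument.
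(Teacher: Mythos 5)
Your skeleton matches the paper's: conjugate the equation by $\cosh(\sigma|D_x|)$, use the $H^2$-level almost conserved quantity \eqref{A} so that the principal terms cancel and only integrals against the commutator $F$ survive, and extract the factor $\sigma^2$ from the identity $\cosh(a+b)-\cosh a\cosh b=\sinh a\sinh b$ together with $|\sinh x|\le|x|\cosh x$ (your induction reproduces exactly Lemma \ref{cosh-est}, quoted from \cite{DMT}). The genuine gap is in how you propose to close the resulting multilinear estimates. The price of the $\sigma^2$ gain is that the commutator symbol carries \emph{two} extra derivatives $|\xi_i||\xi_j|$ on two distinct factors, and the critical term is not the septilinear one you single out but the quadrilinear term $\iint \partial_x^2U\,\partial_x^2[F(U)]\,dx\,dt'$ coming from the $\|\partial_x^2U\|_{L^2_x}^2$ piece of \eqref{A}: it concentrates the most derivatives on the fewest factors. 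Its frequency weight is $|\xi|^4|\xi_2||\xi_3|^2$ (ordering $|\xi_1|\le|\xi_2|\le|\xi_3|$, $\xi=\xi_1+\xi_2+\xi_3$). Each of the four factors lies in $X^{2,b}$ and can absorb only $\langle\cdot\rangle^2$, and since $|\xi|^4|\xi_2||\xi_3|^2\lesssim|\xi|^2|\xi_1|^2|\xi_2|^2|\xi_3|^2$ fails when $\xi_1$ is small, no redistribution leaves every factor with at most two derivatives. Consequently the trilinear/quintilinear/septilinear $L^2$-based Strichartz estimates of \cite{FP24} — which were calibrated for the \emph{single} extra derivative produced by \eqref{exp-est} — do not apply here, and the derivatives are not simply ``absorbed by the $\langle\xi\rangle^2$ weight.'' The paper closes this term (see \eqref{est-r4}) by rewriting the weight as $|\xi|^3\,|\xi_2|\,|\xi_3|^3$ and pairing the Kenig--Ponce--Vega local smoothing estimate \eqref{kpv-2} (transferred to \eqref{DxU}), which gains one derivative in $L_x^\infty L_T^2$, with the maximal function estimate \eqref{kpv-1} (transferred to \eqref{U-L2}) in $L_x^2L_T^\infty$, via H\"older in mixed norms. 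This is the new technical ingredient of the theorem, and your plan is silent on it.

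A smaller point: your route to \eqref{ACL-mKdV-0} is weaker than needed and does not literally give the stated bound. When $\mu=-1$ every correction term in \eqref{A} is nonnegative, so $\|u_0\|^2_{H^{\sigma,2}}\le A_\sigma(0)$ directly, and with \eqref{bound.u} one gets $\|u\|^2_{X^{\sigma,2,b}_{T_0}}\lesssim A_\sigma(0)$, which produces exactly the degree-four polynomial $A_\sigma(0)^2\big(1+A_\sigma(0)+A_\sigma(0)^2\big)$. Your Gagliardo--Nirenberg detour, with the extra factor $\big(1+A_\sigma(0)\big)$, yields a polynomial of higher degree than the one claimed in \eqref{ACL-mKdV-0}.
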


As observed in \cite{FP24} (see Remark $4.1$ there), there is price to pay when we use the $\cosh(\sigma |\xi|)$ instead of $e^{\sigma |\xi|}$ as the weight function. With the hyperbolic cosine, one needs to absorb two derivatives  while constructing the almost conservation law \eqref{ACL-mKdV}. 
The discovery of how to handle these extra derivatives can be found in the proof of Theorem \ref{ACL-mKdV-thm},  particularly during the derivation of inequality \eqref{est-r4}. 
The key ingredients to obtain \eqref{est-r4} are the following  estimates satisfied by the linear group associated to the linear part of the IVP \eqref{mKdV-IVP} 
\begin{equation}
\label{kpv-1}
\| e^{-t\p_x^3} u_0\|_{L_x^2L^\infty_T}
\le
C\|u_0\|_{H^s}, \text{ for all } T\lesssim 1, \; u_0\in H^s(\rr) \;\text{ and }\; s> \frac 34,
\end{equation}
whose proof can be found in \cite{KPV91-1} (see Corollary 2.9 there)  and
\begin{equation}
\label{kpv-2}
\|\p_x e^{-t\p_x^3} u_0\|_{L_x^\infty L^2_T}
\le
C\|u_0\|_{L^2}, \text{ for all } u_0\in L^2(\rr),
\end{equation}
whose the proofs can be found in \cite{KPV93}, see also \cite{KPV91}. 

Finally, we are in position to state the main result of this work that describes evolution in time and a new lower bound for the radius of analyticity to the solution of the IVP \eqref{mKdV-IVP}.

\begin{theorem}\label{global-mKdV-thm}
Let $\sigma_0>0$, $s\ge 1/4$, $u_0\in H^{\sigma_0,s}(\rr)$ and $u\in C([-T_0,T_0];H^{\sigma_0,s}(\rr))$ be the local solution for the IVP \eqref{mKdV-IVP} guaranteed by the local well-posedness result presented in Theorem \ref{lwp-mKdV-thm}. Then, in the defocusing case $(\mu = -1)$, for any $T\ge T_0$, the solution $u$ can be extended globally in time and
$$
u\in C\big([-T,T]; H^{\sigma(T),s}(\rr)\big),
\quad\text{with}\quad
\sigma(T)\ge\min\Big\{\sigma_0, \frac{c}{T^{\frac 12}}\Big\},
$$
with $c$ being a positive constant dependent on $s$, $\sigma_0$ and $\|u_0\|_{H^{\sigma_0, s}}$. 
\end{theorem}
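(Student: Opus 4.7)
The plan is to combine the local existence of Theorem \ref{lwp-mKdV-thm} with the almost conservation law \eqref{ACL-mKdV-0} in the standard iteration scheme on short consecutive time intervals, choosing $\sigma$ small enough that the cumulative growth of $A_\sigma$ keeps the datum in a ball where local existence remains available. Since \eqref{ACL-mKdV-0} is formulated at the $H^{\sigma,2}$ level while the hypothesis permits $s \ge 1/4$, I would first reduce to the case $s=2$ by slightly shrinking the strip: for any $\sigma_1 < \sigma_0$, the pointwise estimate
$$\langle \xi \rangle^{2-s}\, e^{-(\sigma_0-\sigma_1)|\xi|} \le C_{s,\sigma_0,\sigma_1}$$
yields $\|u_0\|_{H^{\sigma_1,2}} \le C_{s,\sigma_0,\sigma_1}\|u_0\|_{H^{\sigma_0,s}}$, so relabelling $\sigma_1$ as $\sigma_0$ I may assume $u_0 \in H^{\sigma_0,2}(\rr)$; the final passage from $H^{\sigma(T),2}$ back to $H^{\sigma(T),s}$ is routine (Sobolev embedding for $s\le 2$, persistence of regularity for $s>2$).

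\textbf{The iteration.} In the defocusing case $\mu=-1$, every nonlinear term in \eqref{A} carries a non-negative sign, so
$$\|u(t)\|_{H^{\sigma,2}}^2 \le C\, A_\sigma(t),$$
while Sobolev embeddings together with $\cosh(\sigma|\xi|)\le\cosh(\sigma_0|\xi|)$ provide an upper bound $A_\sigma(0) \le M_0$ uniformly for $\sigma\in(0,\sigma_0]$, where $M_0$ depends only on $\|u_0\|_{H^{\sigma_0,2}}$. Fix $T\ge T_0$ and a parameter $\sigma\in(0,\sigma_0]$ to be optimized, and set $T_* := c_0(1+2M_0)^{-a}$; by Theorem \ref{lwp-mKdV-thm}, a local solution can be constructed on each subinterval $[kT_*,(k+1)T_*]$ whose restarting datum satisfies $A_\sigma(kT_*)\le 2M_0$. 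Applying \eqref{ACL-mKdV-0} on consecutive subintervals and telescoping, as long as $A_\sigma(jT_*)\le 2M_0$ for all $j\le k$,
$$A_\sigma((k+1)T_*) \le M_0 + (k+1)\, C\sigma^2(2M_0)^2\bigl(1+2M_0+(2M_0)^2\bigr).$$
The inductive requirement that the right-hand side stay $\le 2M_0$ through $n:=\lceil T/T_*\rceil$ iterations reduces to $n\, C_{M_0}\,\sigma^2 \le 1$, i.e.\ $\sigma^2 \le c(M_0)\,T_*/T$. Therefore choosing $\sigma(T) := \min\{\sigma_0,\, c\,T^{-1/2}\}$ with $c$ depending only on $s,\sigma_0,\|u_0\|_{H^{\sigma_0,s}}$ closes the iteration and delivers the solution on $[0,T]$ in $C([0,T];H^{\sigma(T),2}(\rr))$; the extension to $[-T,T]$ follows from the time-reversal symmetry of \eqref{mKdV-IVP}.

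\textbf{Principal obstacle.} The delicate step is the bootstrap closing the induction: at each level $k$ we need the local solution restarted from $u(kT_*)$ to lie in $X^{\sigma,2,b}_{T_*}$ with the norm bound of Theorem \ref{lwp-mKdV-thm}, so that \eqref{ACL-mKdV} — and hence \eqref{ACL-mKdV-0} — is actually applicable on $[kT_*,(k+1)T_*]$. This uses the continuity in time of $\|u(t)\|_{H^{\sigma,2}}$ along the local flow together with the positivity of $A_\sigma$ in the defocusing case, which prevents $A_\sigma$ from exceeding the threshold $2M_0$ between discrete checkpoints without first passing through an intermediate level where \eqref{ACL-mKdV-0} applied on a shorter subinterval forces a contradiction. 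Once this bootstrap is set up the improvement over \cite{FP24} is mechanical: the factor $\sigma^2$ in \eqref{ACL-mKdV-0} (versus $\sigma^1$ in \eqref{ACL2}) turns the iteration budget from $n\sigma\lesssim 1$ into $n\sigma^2\lesssim 1$, and hence produces $\sigma(T)\gtrsim T^{-1/2}$ in place of the previous $T^{-1}$.
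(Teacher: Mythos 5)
Your proposal is correct and follows essentially the same route as the paper, which simply invokes the iteration scheme of Theorem 1.4 in \cite{FP23} with $s=2$ and the improved $\sigma^2$ factor from \eqref{ACL-mKdV-0}; your reduction to $s=2$ by shrinking the strip, the telescoping of the almost conservation law over subintervals of length $T_*$, and the budget $n\,C_{M_0}\sigma^2\lesssim 1$ yielding $\sigma(T)\gtrsim T^{-1/2}$ are exactly the intended argument. The ``principal obstacle'' you describe is in fact not an obstacle: since \eqref{ACL-mKdV-0} holds for all $t$ in each local existence interval (not only at its endpoint) and Theorem \ref{lwp-mKdV-thm} supplies the $X^{\sigma,2,b}_{T_*}$ bound directly from the restart datum, the discrete induction closes without any continuity bootstrap.
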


\noindent
{\bf Notations:} Throughout the text we will use standard notations that are commonly used in the partial differential equations.  Besides $\widehat{f}$, the notation $\mathcal{F}_xf$ is used to denote the partial Fourier transform with respect to spatial variable $x$.  We use $c$ or $C$ to denote various  constants whose exact values are immaterial and may
 vary from one line to the next. We use $A\lesssim B$ to denote an estimate
of the form $A\leq cB$ and $A\sim B$ if $A\leq cB$ and $B\leq cA$.

The structure of this paper unfolds as follows. Section \ref{Sec-2} provides fundamental preliminary estimates. In Section \ref{Sec-3}, the almost conservation law is established as stated in Theorem \ref{ACL-mKdV-thm}. Finally, in  Section \ref{Sec-4} the new algebraic lower bound for the radius of analyticity is obtained by proving Theorem \ref{global-mKdV-thm}.
%

%
%
%
%
%
%
%
\section{Preliminaries} 
\label{Sec-2}

In what follows, we present a list of results that will be useful in the proof of the almost conservation law.

\begin{lemma}[Lemma $5$ in \cite{SS}]
\label{est-XT}
Let $\sigma\geq 0$, $s\in\mathbb{R}$, $-1/2<b<1/2$ and $T>0$. 
Then, for any time interval $I\subset [-T,T]$, we have
$$
\lm \chi_{I}u\rm_{X^{\sigma,s,b}}\leq C\lm u\rm_{X_{T}^{\sigma,s,b}},
$$
where $\chi_{I}$ is the characteristic of $I$ and $C$ depends only on $b$.
\end{lemma}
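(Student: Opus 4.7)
The plan is to reduce the inequality to a classical one-dimensional multiplier fact on the fractional Sobolev space $H^b_t(\mathbb{R})$ for $|b|<1/2$, exploiting the fact that the weight $\cosh(\sigma|\xi|)\langle\xi\rangle^s$ in $\|\cdot\|_{X^{\sigma,s,b}}$ depends only on the spatial frequency $\xi$, while multiplication by $\chi_I(t)$ acts only in the time variable.

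First, unwinding the definition of $\|\cdot\|_{X^{\sigma,s,b}_T}$, for every $\varepsilon>0$ I would pick an extension $\tilde u\in X^{\sigma,s,b}$ of $u$ with $\|\tilde u\|_{X^{\sigma,s,b}}\le \|u\|_{X^{\sigma,s,b}_T}+\varepsilon$. Since $I\subset[-T,T]$, one has $\chi_I u=\chi_I\tilde u$ as distributions on $\mathbb{R}_x\times\mathbb{R}_t$, so it suffices to prove the unrestricted inequality $\|\chi_I v\|_{X^{\sigma,s,b}}\le C(b)\,\|v\|_{X^{\sigma,s,b}}$ for every $v\in X^{\sigma,s,b}$, uniformly in the interval $I$, and then let $\varepsilon\to 0$.

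Next I would freeze the spatial frequency. By Plancherel in $\tau$, for each fixed $\xi$ the inner $\tau$-integral in $\|v\|_{X^{\sigma,s,b}}^2$ equals $\|e^{it\xi^3}\mathcal{F}_x v(\xi,\cdot)\|_{H^b_t}^2$, since multiplying a time function by the phase $e^{it\xi^3}$ shifts its Fourier transform by $\xi^3$, converting $\langle\tau-\xi^3\rangle^b$ into $\langle\tau\rangle^b$. Multiplication by $\chi_I(t)$ commutes with both $\mathcal{F}_x$ and with the modulation $e^{it\xi^3}$, so, writing $g_\xi(t)=e^{it\xi^3}\mathcal{F}_x v(\xi,t)$,
\begin{equation*}
\|\chi_I v\|_{X^{\sigma,s,b}}^2=\int_{\mathbb{R}}\cosh^2(\sigma|\xi|)\langle\xi\rangle^{2s}\,\|\chi_I g_\xi\|_{H^b_t}^2\,d\xi,
\end{equation*}
and analogously for $v$ itself without the factor $\chi_I$. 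Consequently the full statement reduces to proving the uniform one-dimensional bound $\|\chi_I g\|_{H^b_t(\mathbb{R})}\le C(b)\,\|g\|_{H^b_t(\mathbb{R})}$ for $|b|<1/2$, applied pointwise in $\xi$ with $g=g_\xi$, followed by integration against the positive weight $\cosh^2(\sigma|\xi|)\langle\xi\rangle^{2s}$.

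That one-dimensional inequality is the classical statement that $\chi_I$ is a bounded pointwise multiplier on $H^b(\mathbb{R})$ in the range $|b|<1/2$. For $0\le b<1/2$ I would invoke the Gagliardo--Slobodeckij characterization
\begin{equation*}
\|g\|_{H^b}^2\sim \|g\|_{L^2}^2+\iint_{\mathbb{R}\times\mathbb{R}}\frac{|g(s)-g(t)|^2}{|s-t|^{1+2b}}\,ds\,dt,
\end{equation*}
split the double integral into the four regions cut out by $I$ and $I^c$, and estimate each piece directly; the required integrability near the boundary $\partial I$ uses precisely that $1+2b<2$. For $-1/2<b<0$ I would derive the bound by duality from the positive-$b$ case, using $(H^b)^*=H^{-b}$ and the fact that the multiplier $\chi_I$ is its own adjoint. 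The main obstacle is the Gagliardo--Slobodeckij step in the mixed regions, since that is exactly where the restriction $|b|<1/2$ is forced and where the result degenerates at the endpoints.
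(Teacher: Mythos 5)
Your proposal is correct and follows essentially the same route as the proof of Lemma~5 in \cite{SS}, which the paper simply cites: pass to a near-optimal extension, use Plancherel in $x$ together with the modulation $e^{\pm it\xi^3}$ to strip off the $\xi$-dependent weight and the frequency shift, and reduce everything to the classical fact that $\chi_I$ is a bounded multiplier on $H^b_t(\mathbb{R})$ for $|b|<1/2$, uniformly in $I$. The only point to flag is that in the mixed region of the Gagliardo--Slobodeckij integral the inner integral produces the weight $\mathrm{dist}(s,\partial I)^{-2b}$, so closing that case requires the fractional Hardy inequality (valid precisely for $0<b<1/2$) rather than a mere local integrability observation; this is classical but is a genuine inequality, not a direct estimate.
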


\begin{lemma} Let $b>1/2$, then the following Strichartz's type estimate
\begin{equation}\label{str-1}
\|u\|_{L^p_xL^p_t}
\le
C\|u\|_{X^{0,b}},
\end{equation}
holds  for $p=6,8$.
\end{lemma}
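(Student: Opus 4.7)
The plan is to derive the estimate by combining the linear Strichartz estimate for the Airy group $U(t) := e^{-t\partial_x^3}$ with the standard transference principle that relates $X^{0,b}$ to free solutions, and then to cover $p=6$ by interpolation with a trivial $L^2$ bound.

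First I would prove the result for $p=8$. The Airy group satisfies the dispersive decay $\|U(t)f\|_{L^\infty_x}\lesssim |t|^{-1/3}\|f\|_{L^1_x}$, and the $TT^*$ method applied to this decay yields the linear Strichartz estimate $\|U(t)f\|_{L^8_{t,x}}\le C\|f\|_{L^2_x}$, the pair $(q,r)=(8,8)$ being admissible for the third-order Airy dispersion (Kenig--Ponce--Vega). To transfer this to $X^{0,b}$, I would decompose any $u\in X^{0,b}$ along the characteristic surface $\tau=\xi^3$ by writing
\[
u(x,t)=\int_{\mathbb{R}}e^{it\lambda}\,(U(t)g_\lambda)(x)\,d\lambda,\qquad \widehat{g_\lambda}(\xi):=\widehat{u}(\xi,\lambda+\xi^3).
\]
Taking $L^8_{t,x}$ norms, applying Minkowski's integral inequality, and invoking the linear Strichartz bound give
\[
\|u\|_{L^8_{t,x}}\le\int\|U(t)g_\lambda\|_{L^8_{t,x}}\,d\lambda\lesssim\int\|g_\lambda\|_{L^2_x}\,d\lambda.
\]
A Cauchy--Schwarz in $\lambda$ with weight $\langle\lambda\rangle^b$, together with the change of variable $\tau=\lambda+\xi^3$ and Plancherel's theorem (which gives $\int\langle\lambda\rangle^{2b}\|g_\lambda\|_{L^2_x}^2\,d\lambda=\|u\|_{X^{0,b}}^2$), produces
\[
\int\|g_\lambda\|_{L^2_x}\,d\lambda\le\Big(\int\langle\lambda\rangle^{-2b}\,d\lambda\Big)^{1/2}\|u\|_{X^{0,b}},
\]
and the first factor is finite precisely because $b>\tfrac12$.

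For $p=6$, Plancherel's theorem gives the trivial embedding $\|u\|_{L^2_{t,x}}=\|\widehat{u}\|_{L^2}\le\|u\|_{X^{0,b}}$, and log-convexity of the $L^p$ norm (H\"older) produces
\[
\|u\|_{L^6_{t,x}}\le\|u\|_{L^2_{t,x}}^{1/9}\|u\|_{L^8_{t,x}}^{8/9},
\]
since $\tfrac16=\tfrac{1/9}{2}+\tfrac{8/9}{8}$; combining this with the $p=8$ bound just established immediately yields $\|u\|_{L^6_{t,x}}\lesssim\|u\|_{X^{0,b}}$.

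The main technical input, if one insists on a self-contained argument, is the linear $L^8_{t,x}$ Strichartz estimate for $U(t)$, which relies on the oscillatory-integral analysis of the Airy kernel via stationary phase together with the $TT^*$ mechanism. Once that classical input is granted, both the transference step and the subsequent interpolation are essentially mechanical, and the choice $b>\tfrac12$ enters only to ensure summability of $\langle\lambda\rangle^{-2b}$ in $\lambda$.
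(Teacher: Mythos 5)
Your proposal is correct and follows essentially the same route as the paper: the $L^8_{x,t}$ Strichartz estimate for the Airy group (Kenig--Ponce--Vega) transferred to $X^{0,b}$ for $p=8$, then interpolation with the trivial $L^2$ bound for $p=6$; you merely write out the standard transference and Cauchy--Schwarz in $\lambda$ steps that the paper delegates to citations. The exponents in your log-convexity step ($\theta=1/9$) and the admissibility check for $(8,8)$ are both correct.
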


\begin{proof}
The proof of \eqref{str-1} for  $p=8$ follows from  the famous Strichartz estimate for the Airy group obtained in \cite{KPV91} (see also  \cite{Axel-1}). Interpolating this with the trivial estimate $\|w\|_{L^2_{xt}}\leq C \|w\|_{X^{0,0}}$ yields \eqref{str-1} for  $p=6$, see \cite{LG}.
\end{proof}

The following lemma is an immediate consequence of the  Strichartz's estimate  \eqref{str-1} and the generalized H\"older's inequality.

\begin{lemma}
\label{strichartz.lemma}
For all $b>1/2$ and $U_1,U_2,U_3, U_4\in X^{0,b}$, one has
\begin{equation*}
\begin{split}
\|U_1 U_2 U_3\|_{L^{2}_xL_{t}^2}
&\le
C\|U_1\|_{X^{0,b}}\|U_2\|_{X^{0,b}}\|U_3\|_{X^{0,b}},
\\
\|U_1 U_2 U_3 U_4\|_{L^{2}_xL_{t}^2}
&\le
C\|U_1\|_{X^{0,b}}\|U_2\|_{X^{0,b}}\|U_3\|_{X^{0,b}}\|U_4\|_{X^{0,b}}.
\end{split}
\end{equation*} 
\end{lemma}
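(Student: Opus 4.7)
The plan is to reduce both inequalities to a single application of the generalized H\"older inequality in $L^2_{xt}=L^2_xL^2_t$ followed by an application of the Strichartz-type estimate \eqref{str-1}; indeed, the sentence immediately preceding the lemma already advertises this as the strategy, and the only genuine choice is to match the H\"older exponents to the two Strichartz exponents $p=6$ and $p=8$ made available by \eqref{str-1}.

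For the trilinear inequality, I would use the decomposition
$$\tfrac{1}{2}=\tfrac{1}{6}+\tfrac{1}{6}+\tfrac{1}{6},$$
apply the three-factor H\"older inequality in $L^2_{xt}$, and then invoke \eqref{str-1} with $p=6$ on each factor:
$$\|U_1U_2U_3\|_{L^2_xL^2_t}\le\prod_{j=1}^{3}\|U_j\|_{L^6_xL^6_t}\lesssim\prod_{j=1}^{3}\|U_j\|_{X^{0,b}}.$$

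For the quadrilinear inequality, the analogous argument uses $\tfrac{1}{2}=4\cdot\tfrac{1}{8}$, four-factor H\"older in $L^2_{xt}$, and \eqref{str-1} with $p=8$ on each of the four factors. No substantive obstacle arises: the lemma is essentially an exercise in bookkeeping of exponents. The mild observation is simply that the only two Strichartz endpoints supplied by \eqref{str-1} are precisely the ones needed to balance three, respectively four, equal factors against $L^2_{xt}$, so the argument does not require any interpolation, endpoint refinement, or control of dual exponents.
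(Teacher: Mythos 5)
Your proof is correct and is exactly the argument the paper intends: the paper states the lemma as an immediate consequence of the Strichartz estimate \eqref{str-1} for $p=6,8$ combined with the generalized H\"older inequality, which is precisely your exponent splitting $\tfrac12=\tfrac16+\tfrac16+\tfrac16$ and $\tfrac12=4\cdot\tfrac18$.
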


As mentioned in the introduction, when applying the hyperbolic cosine weight in proving the almost conservation law stated in Theorem \ref{ACL-mKdV-thm}, two extra derivatives emerge. The subsequent lemma presents crucial inequalities for managing these additional derivatives.

First, let us fix some notation.
We denote by $D^p_x$ the differential operator given by $\widehat{D^p_xw}=|\xi|^p\widehat{w}$ and by  $a+$ the quantity $a+\varepsilon$ for any $\varepsilon>0$.

\begin{lemma}
\label{strichartz.lemma2}
For all $b>1/2$ and $0<T\lesssim 1$, one has
\begin{align}
\label{U-L2}
\| U\|_{L_x^2L_T^\infty}
&\le 
C\|U\|_{X^{\frac 34 +,b}_T}\\
\label{DxU}
\| D_x U\|_{L_x^\infty L_T^2}
&\le 
C\|U\|_{X^{0,b}_T}.
\end{align}
\end{lemma}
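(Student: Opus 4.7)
The plan is to reduce both inequalities to the linear group estimates \eqref{kpv-1} and \eqref{kpv-2} recalled in the introduction, by means of the standard Bourgain-space transfer principle: any linear estimate of the form $\|e^{-t\p_x^3}f\|_{Y}\lesssim\|f\|_{H^s}$ promotes, for $b>1/2$, to $\|U\|_{Y}\lesssim\|U\|_{X^{s,b}}$ on every $U\in X^{s,b}$. I would first pass from $X^{s,b}_T$ to $X^{s,b}$ by choosing an extension $\tilde U\in X^{s,b}$ with $\tilde U= U$ on $\rr\times[-T,T]$ and $\|\tilde U\|_{X^{s,b}}\leq 2\|U\|_{X^{s,b}_T}$; since restriction in time to $[-T,T]$ can only decrease the $L^\infty_T$ and $L^2_T$ norms on the left-hand side when $U$ is replaced by $\tilde U$ considered on all of $\rr$, it suffices to prove the two estimates for $\tilde U$ in place of $U$ with the time variable running over $\rr$.

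Given such $\tilde U$, I would write
\[
\widehat{\tilde U}(\xi,\tau)=\frac{g(\xi,\tau)}{\la\tau-\xi^3\ra^{b}},
\qquad
\|g\|_{L^2_{\xi,\tau}}=\|\tilde U\|_{X^{0,b}},
\]
and change variables $\lambda=\tau-\xi^3$ to express $\tilde U$ as a superposition of modulated Airy evolutions,
\[
\tilde U(x,t)=c\int_{\rr}\frac{e^{it\lambda}}{\la\lambda\ra^{b}}\bigl[e^{-t\p_x^3}h_\lambda\bigr](x)\,d\lambda,
\qquad \widehat{h_\lambda}(\xi):=g(\xi,\lambda+\xi^3).
\]
A direct Plancherel computation gives $\int_{\rr}\|h_\lambda\|_{H^s}^2\,d\lambda=\|\tilde U\|_{X^{s,b}}^2$ for every $s\in\rr$, which is the identity that feeds Cauchy-Schwarz in the next step.

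For \eqref{U-L2} I would pull the $L^2_xL^\infty_T$-norm through the $\lambda$-integral by Minkowski's inequality and apply \eqref{kpv-1} (valid because $T\lesssim 1$) with regularity $\tfrac{3}{4}+$, and then use Cauchy-Schwarz in $\lambda$:
\[
\|\tilde U\|_{L^2_xL^\infty_T}
\leq c\int_{\rr}\frac{\|h_\lambda\|_{H^{3/4+}}}{\la\lambda\ra^{b}}\,d\lambda
\leq C\Bigl(\int_{\rr}\frac{d\lambda}{\la\lambda\ra^{2b}}\Bigr)^{1/2}\|\tilde U\|_{X^{3/4+,b}}\,,
\]
where the first factor is finite precisely because $b>1/2$. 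The estimate \eqref{DxU} follows by the identical argument, with the operator $D_x$ commuted through the $\lambda$-integral and the Kato smoothing bound \eqref{kpv-2} applied in place of \eqref{kpv-1}, so that $\|h_\lambda\|_{H^{3/4+}}$ is replaced by $\|h_\lambda\|_{L^2}$. Taking the infimum over admissible extensions $\tilde U$ at the end restores the $X^{s,b}_T$-norms on the right-hand sides. No step presents a genuine obstacle; the only point that requires attention is the time restriction $T\lesssim 1$ demanded by \eqref{kpv-1}, which is exactly the hypothesis imposed on $T$ in the lemma.
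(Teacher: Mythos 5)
Your argument is exactly the paper's: the paper proves this lemma by invoking the classical transfer principle (Lemma 2.9 in \cite{Tao-b}) to upgrade the linear group estimates \eqref{kpv-1} and \eqref{kpv-2} to $X^{s,b}$-bounds, and your proposal simply writes out that transfer argument in full (extension, superposition over modulations $\lambda=\tau-\xi^3$, Minkowski, the linear estimate, Cauchy--Schwarz using $b>1/2$). The details are correct, so this is the same proof, merely unpacked.
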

\begin{proof}
Using classical transfer arguments (see, for example,  Lemma $2.9$ in \cite{Tao-b}), the inequalities  \eqref{U-L2} and \eqref{DxU} are immediate consequences of \eqref{kpv-1} and \eqref{kpv-2}, respectively. 
\end{proof}

\begin{remark} It is worth noticing that the assumption $0<T\lesssim 1$ in Lemma \ref{strichartz.lemma2} doesn't affect our argument because we will be using estimates \eqref{U-L2} and \eqref{DxU} on bounded time intervals with fixed width. 
\end{remark}

\begin{lemma}[Lemma $3$ in \cite{DMT}]
\label{cosh-est}
Let $\xi = \sum\limits_{j=1}^p\xi_j$ for $\xi_j\in\rr$, where $p\ge 1$ is an integer.
Then
\begin{equation}
\Big|1- \cosh(|\xi|) \prod\limits_{j=1}^p \sech(|\xi_j|) \Big|
\le
2^p \sum\limits_{j \neq k=1}^p |\xi_j||\xi_k|.
\end{equation}
\end{lemma}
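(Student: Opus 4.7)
The plan is to exploit the evenness of $\cosh$ and $\sech$ to drop the absolute values, and then apply the addition formula $\cosh(a+b)=\cosh(a)\cosh(b)+\sinh(a)\sinh(b)$ iteratively. A straightforward induction on $p$ produces the symmetric expansion
$$
\cosh\Big(\sum_{j=1}^p \xi_j\Big) = \sum_{\substack{S \subset \{1,\dots,p\}\\ |S|\text{ even}}}\ \prod_{j\in S}\sinh(\xi_j)\prod_{j\notin S}\cosh(\xi_j),
$$
the even-cardinality restriction arising from the requirement that an even number of $\sinh$ factors be multiplied together when unfolding the recursion. Dividing both sides by $\prod_{j=1}^p\cosh(\xi_j)$ and separating the $S=\emptyset$ term (which contributes exactly $1$) yields the core identity
$$
\cosh\Big(\sum_{j=1}^p \xi_j\Big)\prod_{j=1}^p\sech(\xi_j) \;-\; 1 \;=\; \sum_{\substack{S\neq\emptyset\\ |S|\text{ even}}}\ \prod_{j\in S}\tanh(\xi_j).
$$

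With this identity in hand, the remaining work is essentially term-by-term bookkeeping. Since every non-empty $S$ of even cardinality satisfies $|S|\ge 2$, I would fix two distinct indices $a,b\in S$ and bound $|\tanh(\xi_j)|\le 1$ for every index $j\in S\setminus\{a,b\}$. For the two selected indices I invoke the elementary inequality $|\tanh(x)|\le |x|$, which follows from $\tanh(0)=0$ together with $|\tanh'|=\sech^2\le 1$. This gives
$$
\Big|\prod_{j\in S}\tanh(\xi_j)\Big| \;\le\; |\xi_a||\xi_b| \;\le\; \sum_{j\neq k=1}^p |\xi_j||\xi_k|
$$
uniformly in $S$. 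The number of non-empty even-cardinality subsets of $\{1,\dots,p\}$ equals $2^{p-1}-1\le 2^p$, so applying the triangle inequality to the core identity delivers the claimed bound.

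The proof carries no genuine analytic obstacle; the only delicate step is the combinatorial identity for $\cosh$ of a sum, which should be verified by induction on $p$ by splitting the argument as $\xi_p + \sum_{j<p}\xi_j$ and checking that even-parity subsets of $\{1,\dots,p\}$ arise precisely from the parity-preserving pairings of each factor $\cosh$ or $\sinh$ at step $p-1$ with $\cosh(\xi_p)$ or $\sinh(\xi_p)$, respectively. Once that identity is recorded, the remainder of the argument is mechanical, and the choice of two distinguished indices $a,b\in S$ is precisely what produces the quadratic tail on the right-hand side, consistent with the expected vanishing to second order as $\xi_j\to 0$.
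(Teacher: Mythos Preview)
Your argument is correct. The expansion of $\cosh\big(\sum_j \xi_j\big)$ over even-cardinality subsets is the standard hyperbolic analogue of the multinomial cosine identity and is easily verified by the induction you sketch; dividing through by $\prod_j \cosh(\xi_j)$ and invoking $|\tanh x|\le\min\{1,|x|\}$ then gives the estimate with the constant $2^{p-1}-1\le 2^p$, exactly as you wrote. The case $p=1$ is trivially consistent since both sides vanish.

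There is nothing in the present paper to compare against: the authors do not prove this lemma themselves but simply quote it as Lemma~3 of \cite{DMT}. Your self-contained derivation via the $\tanh$ expansion is essentially the argument one finds in that reference, so no alternative approach is at stake here.
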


\begin{lemma}
\label{est.fU-mKdV}
Let $\sigma>0$ and $F$ be defined by
\begin{equation}
\label{fU-def-mKdV}
F(U)
:=
\frac {\mu}3 \partial_x\Big[
U^3-\cosh(\sigma |D_x|)\big((\sech(\sigma|D_x|) U)^3\big)
\Big].
\end{equation}
Then, there is some $\frac12<b<1$ such that
\begin{align}
\label{F-L2}
\|F(U)\|_{L^2_xL^2_t}
&\le
C \sigma^{2}\|U\|^3_{X^{2,b}},\\
\label{UF-L2}
\|U F(U)\|_{L^2_xL^2_t}
&\le
C \sigma^{2}\|U\|^4_{X^{2,b}},
\end{align}
for some constant $C>0$ independent of $\sigma$.
\end{lemma}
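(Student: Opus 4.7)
The plan is to reduce both estimates to multilinear Strichartz-type bounds by first isolating the multiplier that encodes the cancellation between the two terms in \eqref{fU-def-mKdV}. Passing to the space-time Fourier side, one writes
$$
\widehat{F(U)}(\xi,\tau)
=
\frac{i\mu\,\xi}{3}\int K(\xi_1,\xi_2,\xi_3)\,
\widehat U(\xi_1,\tau_1)\widehat U(\xi_2,\tau_2)\widehat U(\xi_3,\tau_3)\,d\delta,
$$
where the integration is over $\xi_1+\xi_2+\xi_3=\xi$ and $\tau_1+\tau_2+\tau_3=\tau$, and the symbol is
$$
K(\xi_1,\xi_2,\xi_3)
:=
1-\cosh(\sigma|\xi|)\prod_{j=1}^{3}\sech(\sigma|\xi_j|).
$$

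Applying Lemma \ref{cosh-est} with $p=3$ to the rescaled frequencies $\sigma\xi_j$ yields the pointwise bound
$$
|K(\xi_1,\xi_2,\xi_3)|\le 8\,\sigma^2\sum_{j\ne k}|\xi_j||\xi_k|,
$$
which is exactly the source of the quadratic power of $\sigma$. By symmetry it suffices to control the contribution of the representative term $|\xi_1||\xi_2|$. Combined with $|\xi|\le |\xi_1|+|\xi_2|+|\xi_3|$, the product $|\xi|\,|\xi_1||\xi_2|$ distributes across the three frequencies as $|\xi_1|^{\alpha_1}|\xi_2|^{\alpha_2}|\xi_3|^{\alpha_3}$ with $\alpha_1+\alpha_2+\alpha_3=3$ and, crucially, $\max_j\alpha_j\le 2$.

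The remaining step is to take absolute values inside the convolution, invoke Plancherel, and recognize the result as the $L^2_{xt}$-norm of a triple product $D_x^{\alpha_1}\tilde U\cdot D_x^{\alpha_2}\tilde U\cdot D_x^{\alpha_3}\tilde U$, where $\tilde U$ is the auxiliary function whose space-time Fourier transform equals $|\widehat U|$ (so that $\|\tilde U\|_{X^{s,b}}=\|U\|_{X^{s,b}}$). The trilinear Strichartz bound in Lemma \ref{strichartz.lemma}, together with the inequality $\|D_x^\alpha U\|_{X^{0,b}}\le \|U\|_{X^{\alpha,b}}\le\|U\|_{X^{2,b}}$ valid for $0\le\alpha\le 2$ (using $|\xi|\le\langle\xi\rangle$), then delivers \eqref{F-L2} for any $b\in(1/2,1)$. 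The quadrilinear estimate \eqref{UF-L2} follows by the identical chain of reasoning applied to the product $U\cdot D_x^{\alpha_1}\tilde U\cdot D_x^{\alpha_2}\tilde U\cdot D_x^{\alpha_3}\tilde U$, using the quadrilinear Strichartz bound from the same lemma.

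The main obstacle is the bookkeeping of the three derivatives (one from the outer $\partial_x$, two extra from Lemma \ref{cosh-est}) to ensure that they never accumulate on any single factor beyond order two; this is handled by the elementary case analysis on which of $|\xi_1|,|\xi_2|,|\xi_3|$ dominates $|\xi|$ and by exploiting the symmetry across the pairs $(j,k)$ in $\sum_{j\ne k}|\xi_j||\xi_k|$. It is precisely this controlled distribution that makes the $X^{2,b}$-norm on the right-hand side sufficient, with no need to invoke the smoothing estimates \eqref{U-L2}--\eqref{DxU} at this stage.
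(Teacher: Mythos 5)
Your proposal is correct and follows essentially the same route as the paper: Lemma \ref{cosh-est} applied to the rescaled frequencies supplies the $\sigma^2$ factor, the three derivatives (one from $\partial_x$, two from the symbol bound) are distributed among the three factors so that no factor carries more than two, and the trilinear and quadrilinear Strichartz bounds of Lemma \ref{strichartz.lemma} applied to the auxiliary functions with nonnegative Fourier transform finish the argument. The only cosmetic difference is that the paper orders $|\xi_1|\le|\xi_2|\le|\xi_3|$ and majorizes everything by $|\xi_2||\xi_3|^2$, whereas you expand $|\xi|\le|\xi_1|+|\xi_2|+|\xi_3|$ and track the exponent triple $(\alpha_1,\alpha_2,\alpha_3)$; both yield the same bound.
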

\begin{proof}
We begin by writing the following estimation for the Fourier transform of $F$, a result that can be readily obtained using Lemma \ref{cosh-est}
\begin{equation*}
\begin{split}
|\widehat{F(U)}(\xi,\tau)|
&\le
C|\xi|\int_\ast \big[1-\cosh(\sigma|\xi|) \prod\limits_{j=1}^3 \sech(\sigma |\xi_j|) \big]
|\widehat{U}(\xi_1,\tau_1)| |\widehat{U}(\xi_2,\tau_2)||\widehat{U}(\xi_3,\tau_3)|\\
&\le
C\sigma^2 |\xi|
\int_\ast (|\xi_1||\xi_2|+|\xi_1||\xi_3|+|\xi_2||\xi_3|)
|\widehat{U}(\xi_1,\tau_1)| |\widehat{U}(\xi_2,\tau_2)||\widehat{U}(\xi_3,\tau_3)|,
\end{split}
\end{equation*}
where $\int_\ast$ denotes the integral over the set 
$\{(\xi, \tau)\in \rr^2;  \xi=\xi_1+\xi_2+\xi_3\text{ and }\tau=\tau_1+\tau_2+\tau_3\}$.
Without loss of generality, employing symmetry, we can assume $|\xi_1|\le |\xi_2| \le |\xi_3|$, allowing us to derive the following estimate for the absolute value of $\widehat{F(U)}$
\begin{equation}
\label{F-est}
|\widehat{F(U)}(\xi,\tau)|
\le
C\sigma^2 \int_\ast |\xi_2||\xi_3|^2
|\widehat{U}(\xi_1,\tau_1)| |\widehat{U}(\xi_2,\tau_2)||\widehat{U}(\xi_3,\tau_3)|.
\end{equation}

To establish \eqref{F-L2}, we utilize Parseval's identity and apply estimate \eqref{F-est} to obtain
$$
\|F(U)\|_{L^2_xL^2_t}
=
\|\widehat{F(U)}\|_{L^2_\xi L^2_\tau}
\le
C\sigma^2 \| \widehat{W_1 W_2 W_3}\|_{L^2_\xi L^2_\tau}
=
C\sigma^2 \| W_1 W_2 W_3\|_{L^2_x L^2_t},
$$
where 
\begin{equation}
\label{w-def}
\widehat{W_1}=|\widehat{U}|, \;\;
\widehat{W_2}=|\widehat{D_x U}|,\;\;
\widehat{W_3}=|\widehat{D^2_xU}|.
\end{equation} 
Then, it follows from Lemma \ref{strichartz.lemma} that for all $b> 1/2$ 
\begin{equation}
\label{F-L2-X1}
\|F(U)\|_{L^2_x L^2_t}
\le
C \sigma^2 \|W_1\|_{X^{0,b}}\|W_2\|_{X^{0,b}}\|W_3\|_{X^{0,b}}
\le
C\sigma^2 \|U\|_{X^{0,b}}\|U\|_{X^{1,b}}\|U\|_{X^{2,b}},
\end{equation}
thus ensuring the validity of inequality \eqref{F-L2}.

Regarding \eqref{UF-L2}, we initially note that
\begin{equation*}
\begin{split}
|\widehat{UF(U)}(\tilde{\xi},\tilde{\tau})|
&\le
\int |\widehat{U}(\tilde{\xi}-\xi,\tilde{\tau}-\tau)| |\widehat{F(U)}(\xi,\tau)|d\xi d\tau \\
&\le
C\sigma^2 \int |\widehat{U}(\tilde{\xi}-\xi,\tilde{\tau}-\tau)|\Big(\int_\ast |\xi_2| |\xi_3|^2|\widehat{U}(\xi_1,\tau_1)| |\widehat{U}(\xi_2,\tau_2)||\widehat{U}(\xi_3,\tau_3)|\Big)d\xi d\tau
\end{split}
\end{equation*}
where we have employed the estimate \eqref{F-est}.

Therefore, by Parseval's identity, we derive
\begin{equation*}
\|U F(U)\|_{L^2_x L^2_t}
\le
C\sigma^2 \|W_1^2 W_2 W_3\|_{L^2_x L^2_t},
\end{equation*}
where $W_i$ for $i=1,2,3$ are as defined  in \eqref{w-def}.
Using again Lemma \ref{strichartz.lemma}, we obtain for all $b>1/2$
\begin{equation*}
\|U F(U)\|_{L^2_x L^2_t}
\le
C\sigma^2  \|W_1\|_{X^{0,b}}^2 \|W_2\|_{X^{0,b}} \|W_3\|_{X^{0,b}}
\le
C\sigma^2 \|U\|^2_{X^{0,b}} \|U\|_{X^{1,b}} \|U\|_{X^{2,b}},
\end{equation*}
thus confirming the validity of inequality \eqref{UF-L2}.
\end{proof}

%
%
%
%
%
%
\section{Almost conservation law - Proof of Theorem \ref{ACL-mKdV-thm}} 
\label{Sec-3}
In this section we will provide a proof of the almost conservation law that plays a fundamental role in the proof of the main result of this work.

\begin{proof}[Proof of Theorem \ref{ACL-mKdV-thm}] We start proving \eqref{ACL-mKdV}. Note that the expression for $A_{\sigma}(t)$ provided in \eqref{A} can be represented as
\begin{equation}\label{A-m1}
 A_\sigma(t)=\mathcal{I}_{0}(t) +\mathcal{I}_{1}(t)+\mathcal{I}_{2}(t),
 \end{equation}
 where $U:=\cosh(\sigma |D_x|)u$ and
\begin{equation}\label{I_012}
\begin{split}
\mathcal{I}_{0}(t)
&=
\int [U(x,t)]^2dx,\\
\mathcal{I}_{1}(t)
&=
\int [\p_x U(x,t)]^2 dx-\frac{\mu}{6}\int [U(x,t)]^4 dx,\\
\mathcal{I}_{2}(t)
&=
\int [\p_x^2U(x,t)]^2dx +\frac{1}{18}\int [U(x,t)]^6dx -\frac{5\mu}{3}\int [U(x,t)\p_xU(x,t)]^2dx.
\end{split}
\end{equation}

Moreover, by applying the operator $\cosh(\sigma |D_x|)$ to the mKdV equation \eqref{mKdV-IVP}, we obtain
\begin{equation}
\label{mKdV-U}
\p_tU+ \p_x^3 U +\mu U^2\p_x U
=
F(U),
\end{equation}
where $F(U)$ is given by \eqref{fU-def-mKdV}.

Now, by following the same steps presented in the proof of Proposition~$3.1$ in \cite{FP24} (more precisely, see equation $(3.21)$ there), it can easily be shown that
\begin{equation}
\label{td-A}
\begin{split}
\frac{d}{dt}A_\sigma(t) &= 2\int UF(U)dx +2\int \p_xU\p_x[F(U)]dx -\frac {2\mu}{3} \int U^3F(U) dx+2\int \p^2_x U \p_x^2 [F(U)]dx\\
& 
\quad 
+
\frac13\int U^5F(U)dx
+\frac{10\mu}{3}\int U(\p_xU)^2F(U)dx
+\frac{10\mu}{3}\int U^2\p_x^2UF(U)dx.
\end{split}
\end{equation}

Now, integrating \eqref{td-A} over the time interval $[0,t]$ with $0<t<T$, we obtain
\begin{equation}
\label{goal.A}
A_\sigma(t)
=
A_\sigma(0) + R(t),
\end{equation}
where
\begin{equation}
\label{rt-1}
R(t) = R_{1}(t)+ R_{2}(t)+R_{3}(t)+R_4(t),
\end{equation}
with
\begin{align}
\label{r1t}
R_{1}(t)
&=
2\iint \chi_{[0,t]}UF(U)dxdt', \\
\label{r2t}
R_{2}(t)
&=
-2\iint \chi_{[0,t]}\p_x^2 U F(U)dxdt'
-\frac{2\mu}{3}\iint \chi_{[0,t]} U^3F(U)dxdt',\\
\label{r3t}
R_{3}(t)
&=
\frac13\iint  \chi_{[0,t]}U^5F(U)dxdt' 
+\frac{10\mu}{3}\bigg(\iint  \chi_{[0,t]}[U(\p_xU)^2F(U)
+U^2\p_x^2UF(U)]dxdt'\bigg),\\
R_{4}(t)
&=
2\iint  \chi_{[0,t]}\p^2_x U \p_x^2 [F(U)]dx dt'.
\end{align}

In view of \eqref{goal.A}, the estimate \eqref{ACL-mKdV} will follow if we can show that
\begin{equation}
\label{goal.r}
|R(t)|
\le
C\sigma^2 \|u\|^4_{X^{\sigma,2,b}_T} \big(1+\|u\|^2_{X^{\sigma,2,b}_T}+\|u\|^4_{X^{\sigma,2,b}_T}\big),
\end{equation}
for all $t\in [0,T]$.

Now, applying duality we obtain
\begin{align*}
|R_{1}(t)|
&\le
C\| \chi_{[0,T]}U\|_{L^2_xL^2_t} \|\chi_{[0,T]}F(U)\|_{L^2_xL^2_t}, \\
|R_{2}(t)|
&\le
C\Big(\| \chi_{[0,T]}\p_x^2U\|_{L^2_xL^2_t} \|\chi_{[0,T]}F(U)\|_{L^2_xL^2_t}
+
\| \chi_{[0,T]}U^3\|_{L^2_xL^2_t} \|\chi_{[0,T]}F(U)\|_{L^2_xL^2_t}\Big),\\
|R_{3}(t)|
&\le
C\Big(
\|  \chi_{[0,T]}U^4\|_{L^2_xL^2_t}\|\chi_{[0,T]}UF(U)\|_{L^2_xL^2_t}
+
\| \chi_{[0,T]}U(\p_xU)^2\|_{L^2_xL^2_t}\|\chi_{[0,T]}F(U)\|_{L^2_xL^2_t} \nonumber\\
&\qquad\;\;+\|\chi_{[0,T]} U^2\p_x^2U\|_{L^2_xL^2_t}\|\chi_{[0,T]}F(U)\|_{L^2_xL^2_t}
\Big).
\end{align*}
Then, by combining Lemmas \ref{est-XT}, \ref{strichartz.lemma} and \ref{est.fU-mKdV}, with the estimates restricted in time slab, we derive
\begin{align}\label{est-r}
\nonumber
|R_{1}(t)|
&\le
C\sigma^2 \| U\|^4_{X_T^{2,b}},  \\
|R_{2}(t)|
&\le
C\sigma^2\Big( \| U\|^4_{X_T^{2,b}}
+
 \| U\|^6_{X_T^{2,b}}\Big),\\
 \nonumber
|R_{3}(t)|
&\le
C\sigma^2 \Big(
 \| U\|^6_{X_T^{2,b}}
 +  
 \| U\|^8_{X_T^{2,b}}\Big).
\end{align}

In this way, to get  the desired inequality \eqref{goal.r}, the only remaining task is to estimate the absolute value of the term $R_4(t)$ so as to complete the proof of \eqref{ACL-mKdV}.
To achieve this, employing the same steps to derive \eqref{F-est}, we observe that the Fourier transform of $F$ restricted to spatial variable can be bounded as follows
\begin{equation}
\label{F-est-x}
|\mathcal{F}_x(F(U))(\xi,t')|
\le
C\sigma^2 \int_\bullet |\xi_2||\xi_3|^2
|\mathcal{F}_x(U)(\xi_1,t')| |\mathcal{F}_x(U)(\xi_2,t')||\mathcal{F}_x(U)(\xi_3,t')|, 
\end{equation}
if we assume $|\xi_1|\le |\xi_2| \le |\xi_3|$, where $\int_\bullet$ denotes that the integral is taken over the set $\{\xi\in\rr; \;\xi=\xi_1+\xi_2+\xi_3\}$. 

Applying Parseval's identity in $x$ variable and estimate \eqref{F-est-x}, we get
\begin{equation*}
\begin{split}
|R_4(t)|
&\le
C \sigma^2
\int \chi_{[0,t]}\bigg(\int_\bullet |\xi|^4 |\mathcal{F}_x(U)(\xi,t')| |\xi_2| |\xi_3|^2
|\mathcal{F}_x(U)(\xi_1,t')| |\mathcal{F}_x(U)(\xi_2,t')||\mathcal{F}_x(U)(\xi_3,t')| \bigg)dt'\\
&\le
C \sigma^2
\int \chi_{[0,t]}\bigg(\int_\bullet |\xi|^3 |\mathcal{F}_x(U)(\xi,t')| |\xi_2| |\xi_3|^3
|\mathcal{F}_x(U)(\xi_1,t')| |\mathcal{F}_x(U)(\xi_2,t')||\mathcal{F}_x(U)(\xi_3,t')| \bigg)dt'\\
&=
C\sigma^2
\int \chi_{[0,t]}\bigg(\int \Big[\overline{\mathcal{F}_x(D_x^3W)}(\xi,t')\Big]
\Big[\mathcal{F}_x( W\cdot D_xW \cdot D_x^3 W)(\xi,t')\Big] d\xi\bigg) dt'
\\
&=
C\sigma^2 
\iint \chi_{[0,t]} \overline{D_x^3W} \cdot W\cdot D_xW \cdot D_x^3 W dxdt',
\end{split}
\end{equation*}
since $|\xi|\le 3|\xi_3|$, where $\mathcal{F}_xW=|\mathcal{F}_x(U)|$.

Now, it follows from Holder's inequality and estimates \eqref{U-L2} and \eqref{DxU} that
\begin{equation}
\label{est-r4}
\begin{split}
|R_4(t)|
&\le
C\sigma^2 
\|D_x^3W\|_{L_x^\infty L^2_T}^2  \|W\|_{L_x^2 L^\infty_T} \|D_xW\|_{L_x^2 L^\infty_T}\\
&\le
C\sigma^2
\|D_x^2W\|_{X_{T}^{0, b}}^2\|W\|_{X_{T}^{\frac 34 +, b}} \|D_xW\|_{X_{T}^{\frac 34 +, b}}\\
&\le
C\sigma^2
\|U\|^2_{X_{T}^{2, b}}\|U\|_{X_{T}^{\frac 34 +, b}} \|U\|_{X_{T}^{\frac 74 +, b}}\\
&\le
C\sigma^2 \|U\|_{X_{T}^{2,b}}^4.
\end{split}
\end{equation}

  Using the estimates in \eqref{est-r} and \eqref{est-r4} it readily follows that $R_{\sigma}(t)$ satisfies \eqref{goal.r}, since we have
\begin{equation}
\label{U-u}
\|U\|_{X_T^{2,b}} \le  \|u\|_{X_T^{\sigma,2,b}}.
\end{equation}
In fact, let $v\in X^{\sigma,2,b}$ be an extension of $u$ in $[-T,T]\times \rr$, which implies that $V=\cosh(\sigma |D_x|)v$ is an extension of $U$.
Then, using the definition of the norm $\|\cdot\|_{X^{\sigma,s,b}_T}$  we get 
\begin{equation*}
\|U\|_{X_T^{2,b}}
\le
\|V\|_{X^{2,b}}
=
\|\la \xi\ra^2 \la\tau-\xi^3\ra^b \cosh(\sigma |\xi|)\widehat{v}(\xi,\tau) \|_{L^2_\xi L^2_\tau}
=
\|v\|_{X^{\sigma,2,b}},
\end{equation*}
for all extension $v$ of $u$, thus proving \eqref{U-u} and concluding the proof of \eqref{ACL-mKdV}.

In what follows, we will supply a  proof of \eqref{ACL-mKdV-0}. Using the bound \eqref{bound.u}  of the local solution in  the almost conservation law \eqref{ACL-mKdV}, we get 
\begin{equation}\label{est-At}
A_\sigma(t)
\le
A_\sigma(0) +C\sigma^2 \|u_0\|_{H^{\sigma,2}}^4(1+\|u_0\|_{H^{\sigma,2}}^2+\|u_0\|_{H^{\sigma,2}}^4).
\end{equation}
Hence, the proof of \eqref{ACL-mKdV-0} follows from \eqref{est-At} by noticing that 
\begin{equation}
\label{defocusing}
 \|u_0\|^2_{H^{\sigma,s}}
\le  
A_{\sigma}(0), \text{ when $\mu=-1$}.
\end{equation}
This finishes the proof of Theorem \ref{ACL-mKdV-thm}.
\end{proof}

%
%
%
%
%
%
\section{Global analytic solution - Proof of Theorem \ref{global-mKdV-thm} }
\label{Sec-4}

In this section, we use the almost conservation low established in the previous section and provide a proof of the main result of this work. The idea of the proof is similar to the one used  in \cite{FP23} and \cite{FP24}.

\begin{proof}[Proof of Theorem \ref{global-mKdV-thm}]
The proof  follows precisely the same steps as presented in the proof of Theorem $1.4$ in \cite{FP23}, except for the fact that now we fix $s=2$ and rely on the exponent $2$ in the radius of analyticity $\sigma$ that appears in the almost conservation law \eqref{ACL-mKdV-0}. This change produces $T^{-\frac 12}$ as a lower bound for the radius of analyticity, that is, we are able to extend the local solution $u$ to the time interval $[0, T]$  for any $T> T_0$, thereby obtaining
$$
u\in C([-T,T]; H^{\sigma(T),s}(\rr)),
\text{ with }
\sigma(T)\ge \min\Big\{\sigma_0, \frac{1}{T^{\frac 12}}\Big\},
$$
as stated in Theorem \ref{global-mKdV-thm}. As the procedure is the same as the one used in \cite{FP23}, we omit the details.
\end{proof}


\vskip 0.3cm
\noindent{\bf Acknowledgements.} \\
 The first author acknowledges the support from FAPESP, Brazil (\#2021/04999-9).
The second author acknowledges the grants from FAPESP, Brazil (\#2023/06416-6) and CNPq, Brazil (\#307790/2020-7). 
\\


\noindent
{\bf Conflict of interest statement.} \\
On behalf of all authors, the corresponding author states that there is no conflict of interest.\\

\noindent 
{\bf Data availability statement.} \\
The datasets generated during and/or analysed during the current study are available from the corresponding author on reasonable request.



%


\begin{thebibliography}{99}

\bibitem{BFH}
R. Barostichi, R. Figueira and A. Himonas 
\textit{The modified KdV equation with higher dispersion in Sobolev and analytic spaces on the line,} 
J. Evol. Equ. (2021), 2213–2237.


\bibitem{BGK} J. L. Bona, Z. Gruji\'c and H. Kalisch, 
\textit{Algebraic lower bounds for the uniform radius of spatial analyticity for the generalized KdV equation},
Ann Inst. H. Poincar\'e {\bfseries 22} No. 6 (2005), 783--797.



\bibitem{CKSTT} J. Colliander, M. Keel, G. Staffilani, H. Takaoka, and T. Tao, 
{ \em Sharp global well-posedness for KdV and modified KdV on $\rr$ and $\mathbb{T}$}, J. Amer. Math. Soc. {\bf 16} No. 3 (2003), 705--749.

\bibitem{D} Davidson, R. C.,
{\em Methods in Nonlinear Plasma Theory}, Academic Press, New York, (1972).

\bibitem{DMT} T. T. Dufera, S. Mebrate, and A. Tesfahun,
\textit{On the persistence of spatial analyticity for the Beam equation},
J. Math. Anal. Appl. {\bfseries 509} (2022), 126001.

\bibitem{LG} L. G. Farah, {\em Global rough solutions to the critical generalized KdV equation,} J. Differential Equations {\bf 249} (2010), 1968--1985.

\bibitem{FP23} R. O. Figueira and M. Panthee, {\em Decay of the radius of spatial analyticity for the modified KdV equation and the nonlinear Schr\"odinger equation with third order dispersion}, Nonlinear Differ. Equ. Appl.
(2024), 31--68

\bibitem{FP24} R. O. Figueira and M. Panthee, {\em New lower bounds for the radius of analyticity for the mKdV equation and a system of mKdV-type equations}, J. Evol. Equ. {\bf 24} No. 42 (2024).

\bibitem{GTB} T. Getachew, A. Tesfahun and B. Belayneh
\textit{On the persistence of spatial analyticity for generalized KdV equation with higher order dispersion},
Math. Nachr. {\bf 297} 5 (2024) 1737--1748.

\bibitem{GK-1} Z. Gruji\'c, H. Kalisch; {\em Local well-posedness of the generalized Korteweg-de Vries equation in spaces of analytic functions,} Differential and Integral Equations, {\bf 15} (2002), 1325--1334.

\bibitem{Axel-1}  A. Gr\"unrock, {\em A bilinear Airy-estimate with application to gKdV-3}, Differential and Integral Equations {\bf 18} (2005), 133--1339.

\bibitem{HK} Hasegawa, A. and Kodama, Y.
{\em Solitons in Optical Communications}, Clarendon Press, Oxford, (1995).

\bibitem{J} Johnson, R. S.,
{\em  A Modern Introduction to the Mathematical Theory of Water Waves}, Cambridge University Press, Cambridge,  (1997).

\bibitem{Ka} Y. Katznelson, \textit{An Introduction to Harmonic Analysis}, Cambridge University Press, Cambridge,  (1968).

\bibitem{KPV93}
 C. E. Kenig, G. Ponce and L. Vega,
{\em Well-posedness and scattering results for the generalized Korteweg-de Vries equation via the contraction principle}, Comm. on Pure and Applied Mathematics \textbf{46} (1993), 527--620.

\bibitem{KPV91} C. E. Kenig, G. Ponce, L. Vega, {\em Oscillatory integrals and regularity of dispersive equations}, Indiana Univ. Math. J. {\bf 40} (1991), 33--69.

\bibitem{KPV91-1} C. E. Kenig, G. Ponce, L. Vega, {\em Well-Posedness of the Initial Value Problem for the Korteweg-de Vries Equation}, J. Amer. Math. Soc. {\bf 4}  (1991), 323--347. 

\bibitem{KdV} D. J. Korteweg, G.  de Vries {\em On the change of form of long waves advancing in a rectangular canal, and on a new type of long stationary waves}, Phil. Mag. {\bf 39} (1895), 422--443.


\bibitem{M} R. M. Miura, {\em The Korteweg-de Vries Equation: A Survey of Results}, SIAM Review {\bf 18} No. 3 (1976), 412--459.


\bibitem{SS} 
S. Selberg and D. O. Silva, 
{\em Lower bounds on the radius of a spatial Analyticity for the KdV equation,}
Ann. Henri Poincar\'e {\bf 18 }(2017), 1009--1023.

\bibitem{ST} S. Selberg and A. Tesfahun,  {\em On the radius of spatial analyticity for the 1d Dirac-Klein-Gordon equations}, Journal of Differential Equations {\bf 259} (2015), 4732--4744.


\bibitem{Tao-b} 
T. Tao, 
{\em Nonlinear Dispersive Equations-Local and Global Analysis,} 
Published for the Conference Board of the Mathematical Sciences, Washington, DC; by the American Mathematical Society, Providence, (2006).

\bibitem{Tao} T. Tao, 
{\em Multilinear weighted convolution of $L^2$ functions and applications to nonlinear dispersive equations,} 
Published for the Conference Board of the Mathematical Sciences, Washington, DC; by the Amer. J.  math {\bf 123} (2001), 839--908.

\bibitem{T} 
A. Tesfahun, 
{\em On the radius of spatial analyticity for cubic nonlinear Schrödinger equations,}
J. of Differential Equations {\bf 263} No. 11 (2017), 7496--7512.

\bibitem{TTB} E. Tegegn, A. Tesfahun and B. Belayneh, {\em Lower bounds on the radius of spatial analyticity of solution for KdV-BBM type equations},  Nonlinear Differ. Equ. Appl., (2023), 30--47.

\bibitem{W} Whitham, G. B.,
{\em Linear and Nonlinear Waves,} Wiley, New York, (1974).

\end{thebibliography}
\end{document}